\documentclass[12pt]{amsart}
\usepackage{amsmath,amssymb,amsfonts,amsopn,amscd}
\usepackage[margin=1.6cm]{geometry}
\usepackage{parskip}
\usepackage{enumerate}
\usepackage{enumitem}

\usepackage{graphicx}
\usepackage{import}
\usepackage{pstricks}
\usepackage{amsfonts}
\usepackage{array}
\usepackage{amsthm}
\usepackage{thmtools}
\usepackage{amssymb}
\usepackage{comment}

\usepackage{hyperref}
\usepackage[capitalise]{cleveref}

\headheight=7pt
\textheight=574pt
\textwidth=432pt
\topmargin=14pt
\oddsidemargin=18pt
\evensidemargin=18pt

\newtheorem{theorem}{Theorem}[section]
\newtheorem{corollary}{Corollary}[theorem]
\newtheorem{lemma}[theorem]{Lemma}
\newtheorem{proposition}[theorem]{Proposition}
\newtheorem{definition}{Definition}[section]
\newtheorem*{theorem A}{Theorem A}
\newtheorem*{corollary A}{Corollary A}
\newtheorem*{theorem B}{Theorem B}
\newtheorem*{theorem C}{Theorem C}
\newtheorem*{theorem D}{Theorem D}
\newtheorem*{theorem E}{Theorem E}
\newtheorem*{theorem F}{Theorem F}

\newtheorem{remark}[theorem]{Remark}

\DeclareMathOperator{\GL}{GL}

\DeclareMathOperator{\SL}{SL}

\newcommand{\iso}{\mathbin{\kern.15em\widetilde{\hphantom{\hspace{.6em}}}\kern-.98em\rightarrow\kern.05em}}
\newcommand{\longiso}{\mathbin{\kern.3em\widetilde{\hphantom{\hspace{1.1em}}}\kern-1.55em\longrightarrow\kern.1em}}

\newcommand{\C}{\ensuremath{\mathbb{C}}}

\newcommand{\mfp}{\mathfrak{p}}

\newcommand{\bfG}{\ensuremath{\mathbf{G}}}

\setlength{\parindent}{0.5cm}

\title[Representations of $\bfG(\mathcal{O}_r)$]{\textbf{The Defining Characteristic case of the REPRESENTATIONS OF $\mathrm{GL}_{n}$ and $\mathrm{SL}_{n}$ over principal ideal local rings}}

\author{Nariel Monteiro}
\address{Department of Mathematics
University of California
Santa Cruz, CA 95064 U.S.A}
\email{namontei@ucsc.edu}

\begin{document}

\begin{abstract}

Let $W_{r}(\mathbb{F}_{q})$ be the ring of Witt vectors of length $r$ with residue field $\mathbb{F}_{q}$ of characteristic $p$. In this paper, we study the defining characteristic case of the representations of $\mathrm{GL}_{n}$ and $\mathrm{SL}_{n}$ over the principal ideal local rings $W_{r}(\mathbb{F}_{q})$ and $\mathbb{F}_{q}[t]/t^{r}$. Let ${\mathbf{G}}$ be either $\mathrm{GL}_{n}$ or $\mathrm{SL}_{n}$ and $F$ a perfect field of characteristic $p$, we prove that for most $p$ the group algebras $F[{\mathbf{G}}(W_{r}(\mathbb{F}_{q}))]$ and $F[{\mathbf{G}}(\mathbb{F}_{q}[t]/t^{r})]$ are not stably equivalent of Morita type. Thus, the group algebras $F[{\mathbf{G}}(W_{r}(\mathbb{F}_{q}))]$ and $F[{\mathbf{G}}(\mathbb{F}_{q}[t]/t^{r})]$ are not isomorphic in the defining characteristic case.

\end{abstract}

\maketitle

\section{Introduction}
 
Let $\mathcal{O}$ be a discrete valuation ring with a unique maximal ideal $\mathfrak{p}$ and having finite residue field $\mathbb{F}_{q}$, the field with $q$ elements where $q$ is a power of a prime $p$. Let $\pi$ be a fixed generator of $\mfp$. We denote by $\mathcal{O}_r$ the reduction of $\mathcal{O}$ modulo $\mathfrak{p}^r$. Similarly, given $\mathcal{O}'$ a second discrete valuation ring with the same residue field $\mathbb{F}_{q}$, we can define $\mathcal{O}'_r$. Two good examples to keep in mind are $\mathbb{F}_{p}[t]/t^{r}$ and $\mathbb{Z}_p/p^r \simeq \mathbb{Z}/p^r\mathbb{Z}$. The complex representations of general linear groups over the rings $\mathcal{O}_r$, i.e. $\operatorname{GL}_n(\mathcal{O}_r)$, have been extensively studied; see \cite{stasinski2017representations} for an overview and some history of the subject. In particular, Onn \cite{OnnUri2008Roag} has conjectured that there is an isomorphism of group algebras $\mathbb{C}[\operatorname{GL}_n(\mathcal{O}_r)]\cong\mathbb{C}[\operatorname{GL}_n(\mathcal{O}'_r)]$. When $r=2$, this conjecture was proven by Singla \cite{SinglaPooja2010Orog}. Assuming $p$ is odd, Singla proved a generalization of the conjecture for $r=2$ when $\GL_n$ is replaced by $\mathrm{SL}_n$ with $p \nmid n$ or the classical groups $\mathrm{Sp}_n$, $\mathrm{O}_n$ and $\mathrm{U}_n$ \cite{SinglaPooja2012ORoC}. Later, Stasinski proved this for $\mathrm{SL}_n$ for all $p$ when $r=2$ \cite{StasinskiAlexander2019Romo}. In \cite{monteiro2021}, the author generalized the previous results when $r=2$ over $\mathbb{C}$ to results over a sufficiently large field $K$ of characteristic $l \neq p$. For example, it was shown that there exists an isomorphism of group algebras $K[\operatorname{GL}_n(\mathcal{O}_2)] \cong K[\operatorname{GL}_n(\mathcal{O}'_2)]$ over a sufficiently large field $K$ of characteristic $l$ when $l\neq p$. Furthermore,  Stasinski showed that Onn's conjecture holds for $\GL_2$ in \cite{Stasinski2009}. In the opposite direction, Hassain and Singla in \cite{HassainSingla2022} proved that the group algebras $\C [\mathrm{SL}_2(\mathbb{Z}/2^r\mathbb{Z})]$ and $\C[\mathrm{SL}_2(\mathbb{F}_{2}[t]/t^{r})]$ are not isomorphic for any even positive integer $r>2$. For each $r\geq3$, Hadas recently showed in \cite{hadas2024} that there exists an integer $M$ (depending on $\mathrm{GL}_n$ and $r$) such that the complex group algebras $\C [\mathrm{GL}_n(W_{r}(\mathbb{F}_{q}))]$ and $\C[\mathrm{GL}_n(\mathbb{F}_{q}[t]/t^{r})]$ are isomorphic as long as $p$ is larger than $M$.

Given the results mentioned above and Onn's conjecture, it is natural to ask whether such a group algebra isomorphism exists over any field. Thus, in this paper, we explore the following question: Let $\bfG$ be either $\GL_{n}$ or $\SL_{n}$, considered as group schemes over $\mathcal{O}_r$, are the group algebras $F[\bfG(\mathbb{F}_{q}[t]/t^{r})]$ and $ F[\bfG(W_{r}(\mathbb{F}_{q}))]$ isomorphic for a field $F$ of the same characteristic $p$ as $\mathbb{F}_{q}$? 

We prove, in \Cref{section:defining}, that:

\begin{theorem A}
\label{Theorem:A}
Let $\bfG$ be either $\GL_{n}$ or $\SL_{n}$ and $F$ be a perfect field of the same characteristic $p$ as $\mathbb{F}_{q}$. Given $p \geq n$, the group algebras $F[\bfG(W_{r}(\mathbb{F}_{q}))]$ and $F[\bfG(\mathbb{F}_{q}[t]/t^{r})]$ are not stably equivalent of Morita type when:
\begin{itemize}
    \item $r=2$ and $p\geq 2n$;
    \item $r=3$ and $p\geq 3$;
    \item $r\geq 4$ and $p\geq 2$.
\end{itemize}

\end{theorem A}

\begin{corollary A}
The group algebras $F[\bfG(W_{r}(\mathbb{F}_{q}))]$ and $F[\bfG(\mathbb{F}_{q}[t]/t^{r})]$ are not isomorphic under the conditions of Theorem A.
\end{corollary A}

Note whether the two groups $\bfG(W_{r}(\mathbb{F}_{q}))$ and $\bfG(\mathbb{F}_{q}[t]/t^{r})$ have isomorphic group algebras is trivially true whenever the groups are isomorphic. Therefore, we first investigate what conditions guarantee that those groups are not isomorphic. The non-isomorphism of these groups has been explored. For example, when $r=2$, this question has been explored in the work of Sah \cite{sah1974cohomology,sah1977cohomology}, Serre \cite{serre1997abelian}, and Stasinski \cite{StasinskiAlexander2019Romo}, who proved that, in general, these groups are not isomorphic. In \cref{Section:p}, we explore different conditions that guarantee that the groups $\bfG(W_{r}(\mathbb{F}_{q}))$ and $\bfG(\mathbb{F}_{q}[t]/t^{r})$ are not isomorphic for general $r$. To be more specific, we show that their $p$-Sylow subgroups have different $p$-exponents, in general. Later in \cref{section:defining}, we study how this condition influences the group algebras. 
 
\subsection*{Acknowledgements}
Parts of this paper were established while I was at the NSF ASCEND Writing Retreat, and I would like to express my gratitude for this opportunity. I also thank Robert Boltje, George McNinch, Sunrose Shrestha, and Alexander Stasinski for helpful discussions. This material is based on work supported by the National Science Foundation MPS-Ascend Postdoctoral Research Fellowship under Grant No. 2213166. 
 

\section{The $p$-exponent of $\bfG(\mathcal{O}_r)$}
\label{Section:p}

In this section, we study the $p$-exponent of $\bfG(\mathcal{O}_r)$. The exponent of a finite group $G$ is the least common multiple of the orders of its elements denoted as $\mathrm{exp}(G)$. We denote by $\mathrm{exp}_p(G)$, the $p$-exponent of $G$, for the $p$-part of the exponent of $G$. Note that the $\mathrm{exp}_p(G)$ is the same as the exponent of a Sylow $p$-subgroup of $G$. Thus, to compute $\mathrm{exp}_p(G)$, we focus our attention on the orders of $p$ -elements of $G$.

\begin{lemma}
\label{lemma:bound-p}
    The $\mathrm{exp}_p(\bfG(\mathcal{O}_r)) \leq p \cdot \mathrm{exp}_p(\bfG(\mathcal{O}_{r-1}))$. Thus, $\mathrm{exp}_p(\bfG(\mathcal{O}_r)) \leq p^{r-1} \cdot \mathrm{exp}_p(\bfG(\mathcal{O}_1))$. 
\end{lemma}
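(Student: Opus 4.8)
The plan is to use the reduction homomorphism induced by the ring surjection $\mathcal{O}_r \twoheadrightarrow \mathcal{O}_{r-1}$ and to analyze its kernel. Reducing matrix entries modulo $\mathfrak{p}^{r-1}$ gives a group homomorphism $\rho\colon \bfG(\mathcal{O}_r) \to \bfG(\mathcal{O}_{r-1})$ whose kernel $K$ is the $(r-1)$-st principal congruence subgroup. I would first show that every element of $K$ can be written uniquely as $I + \pi^{r-1}A$ with $A \in \M_n(\mathbb{F}_q)$ in the $\GL_n$ case (respectively with $A$ trace-zero in the $\SL_n$ case), using that $\pi^{r-1}\mathfrak{p} = \pi^r = 0$ in $\mathcal{O}_r$, so that $\pi^{r-1}A$ depends only on $A \bmod \mathfrak{p}$.

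The key computation is that $K$ is an elementary abelian $p$-group. Since $r \geq 2$ gives $2(r-1) \geq r$, we have $\pi^{2(r-1)} = 0$ in $\mathcal{O}_r$, whence $(I+\pi^{r-1}A)(I+\pi^{r-1}B) = I + \pi^{r-1}(A+B)$. Thus $K$ is isomorphic to the additive group $(\M_n(\mathbb{F}_q),+)$ in the $\GL_n$ case and to $(\mathfrak{sl}_n(\mathbb{F}_q),+)$ in the $\SL_n$ case, the determinant condition reducing to $\tr(A)=0$ because the higher-order terms again vanish. In either case $\mathbb{F}_q$ has characteristic $p$, so every nontrivial element of $K$ has order $p$.

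With the exact sequence $1 \to K \to \bfG(\mathcal{O}_r) \xrightarrow{\rho} \bfG(\mathcal{O}_{r-1})$ in hand, I would bound the order of an arbitrary $p$-element $g \in \bfG(\mathcal{O}_r)$. Its image $\rho(g)$ is a $p$-element of $\bfG(\mathcal{O}_{r-1})$, so its order divides $m := \mathrm{exp}_p(\bfG(\mathcal{O}_{r-1}))$; hence $g^m \in K$ and, since $K$ has exponent $p$, we get $g^{mp} = 1$. Therefore the order of every $p$-element of $\bfG(\mathcal{O}_r)$ divides $pm$, and since $\mathrm{exp}_p$ is the exponent of a Sylow $p$-subgroup (the least common multiple of these $p$-power orders, as noted above), it follows that $\mathrm{exp}_p(\bfG(\mathcal{O}_r)) \mid p\cdot m$, which gives the first inequality. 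The second inequality then follows by an immediate induction on $r$.

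The argument is essentially formal once the kernel is identified; the only point requiring genuine care is the description of $K$ and its exponent, which is exactly where the hypothesis $r \geq 2$ enters, namely to kill the quadratic term $\pi^{2(r-1)}AB$. I expect no other obstacle: in particular, surjectivity of $\rho$ is not needed, since the bound uses only that $\rho(g)$ lies in $\bfG(\mathcal{O}_{r-1})$, so that its order divides the $p$-exponent of that group.
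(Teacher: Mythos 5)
Your proposal is correct and takes essentially the same route as the paper: both reduce modulo $\mathfrak{p}^{r-1}$, observe that the $\mathrm{exp}_p(\bfG(\mathcal{O}_{r-1}))$-th power of a $p$-element lands in the principal congruence kernel $K = \{I+\pi^{r-1}X\}$, and use that $K$ has exponent $p$ (via $\pi^{2(r-1)}=0$ and $p\in\mathfrak{p}$). Your explicit identification of $K$ with $(\M_n(\mathbb{F}_q),+)$ or $(\mathfrak{sl}_n(\mathbb{F}_q),+)$ is a harmless elaboration of the paper's direct computation $(I+\pi^{r-1}X)^p = I + p\pi^{r-1}X = I$.
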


\begin{proof}
    Let $G_r=\bfG(\mathcal{O}_r)$ and $\mathrm{exp}_p(G_{r-1})=z$. Given $g$ a $p$-element of $G_r$ then $g^z$ is an element of $K_{r-1}$. Thus, $$g^{zp}=(I+\pi^{r-1}X)^p=I+p\pi^{r-1}X=1$$ for some matrix $X$ with entries over $\mathcal{O}_r$. Thus, $\mathrm{exp}_p(G_r) \leq p \cdot \mathrm{exp}_p(G_{r-1})$. The second statement follows by induction. 
\end{proof}

From now on, we denote $G_r:=\bfG(\mathbb{F}_{q}[t]/t^{r})$ and $G'_r:=\bfG(W_{r}(\mathbb{F}_{q}))$.

\begin{proposition}
\label{Proposition:p-exponentr}
  Assume $p\geq n$ then the $p$-exponent of $G'_r=\bfG(W_{r}(\mathbb{F}_{q}))$  is $p^r$ and $p$-exponent of $G_r=\bfG(\mathbb{F}_{q}[t]/t^{r})$ is less than equal to $p^{\left\lceil \log_p(r) \right\rceil +1}$. Thus, $\mathrm{exp}_p(\bfG(W_{r}(\mathbb{F}_{q})))> \mathrm{exp}_p(\bfG(\mathbb{F}_{q}[t]/t^{r}))$ when

  \begin{itemize}
  \item $r=3$ and $p\geq 3$;
  \item $r\geq 4$ and $p\geq 2$.
  \end{itemize}

\end{proposition}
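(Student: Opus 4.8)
The plan is to establish the two exponent estimates independently and then compare them by elementary arithmetic with powers of $p$. Both $\mathbb{F}_q[t]/t^r$ and $W_r(\mathbb{F}_q)$ are local with residue field $\mathbb{F}_q$, but the former has characteristic $p$ while the latter has characteristic $p^r$, and this is exactly the distinction I would exploit. Throughout I use the hypothesis $p \geq n$ in the form: a unipotent matrix $I + \bar N$ over $\mathbb{F}_q$ has $\bar N^n = 0$, hence $\bar N^p = 0$, hence $(I+\bar N)^p = I + \bar N^p = I$; in particular $\mathrm{exp}_p(\bfG(\mathbb{F}_q)) = p$ (the Sylow $p$-subgroup being the unipotent upper-triangular group, for both $\GL_n$ and $\SL_n$).

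For the upper bound on $\mathrm{exp}_p(G_r)$ I would argue inside $\mathbb{F}_q[t]/t^r$, where characteristic $p$ makes orders collapse. Let $g$ be a $p$-element of $\bfG(\mathbb{F}_q[t]/t^r)$ with reduction $\bar g$ modulo $t$. Then $\bar g$ is unipotent, so $\bar g^p = I$ by the previous paragraph, which places $g^p$ in the first congruence subgroup $K_1 = \ker(\bfG(\mathbb{F}_q[t]/t^r) \to \bfG(\mathbb{F}_q))$. Writing $g^p = I + tX$, the fact that $I$ and $tX$ commute together with $\binom{p}{k}\equiv 0 \bmod p$ for $0<k<p$ gives $(I+tX)^{p^m} = I + t^{p^m}X^{p^m}$, which is the identity once $p^m \geq r$, i.e. for $m = \lceil \log_p r\rceil$ since $t^r = 0$. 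Hence the order of $g$ divides $p^{\lceil \log_p r\rceil + 1}$, which is the claimed bound. (Note that \cref{lemma:bound-p} alone would only give the weaker bound $p^r$ here, so this characteristic-$p$ refinement is essential for the comparison.)

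For $G'_r$ I would prove the upper bound $p^r$ and a matching lower bound. The upper bound is immediate from \cref{lemma:bound-p}: $\mathrm{exp}_p(G'_r) \leq p^{r-1}\,\mathrm{exp}_p(\bfG(\mathbb{F}_q)) = p^{r-1}\cdot p = p^r$. The lower bound is the crux, and here the mixed characteristic of $W_r(\mathbb{F}_q)$ is what makes it work. I would exhibit one element of order exactly $p^r$, namely the regular unipotent $u = I + N$ with $N = \sum_{i=1}^{n-1} E_{i,i+1}$ a single Jordan block; since $\det u = 1$, the same $u$ serves for both $\SL_n$ and $\GL_n$. Expanding $u^k = \sum_{j=0}^{n-1}\binom{k}{j}N^j$, one has $u^{p^r} = I$ because $v_p\binom{p^r}{j} = r - v_p(j) = r$ for $1 \leq j \leq n-1 < p$ (here $p \geq n$ forces $v_p(j)=0$), whereas $u^{p^{r-1}} \neq I$ since its coefficient of $N$ is $\binom{p^{r-1}}{1} = p^{r-1} \not\equiv 0 \bmod p^r$. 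Thus $u$ has order $p^r$ and $\mathrm{exp}_p(G'_r) = p^r$.

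I expect this lower bound to be the main obstacle, and it is instructive to see why the analogous element fails over $\mathbb{F}_q[t]/t^r$: there the middle binomial coefficients $\binom{p}{j}$ are literally zero, so $(I+N)^p = I + N^p = I$ collapses the order to $p$, while in $W_r(\mathbb{F}_q)$ those same coefficients are merely divisible by $p$ (not zero), and $p$ itself is a nonzero nilpotent element with $p^{r-1}\neq 0 = p^r$, so the order survives all the way up to $p^r$. Granting the two estimates, the comparison is pure arithmetic: $\mathrm{exp}_p(G'_r) = p^r > p^{\lceil \log_p r\rceil + 1} \geq \mathrm{exp}_p(G_r)$ exactly when $r > \lceil \log_p r\rceil + 1$, and one checks directly that this holds for $r = 3,\ p \geq 3$ and for $r \geq 4,\ p \geq 2$ (for instance, when $p = 2$ one has $\lceil \log_2 r\rceil + 1 \leq r-1 < r$ as soon as $r \geq 4$).
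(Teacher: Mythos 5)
Your proposal is correct and follows essentially the same route as the paper: the upper bound $p^{\lceil\log_p r\rceil+1}$ for $G_r$ via $g^p\in K_1$ and Frobenius-type collapsing in characteristic $p$, the upper bound $p^r$ for $G'_r$ via \cref{lemma:bound-p}, an explicit unipotent witness of order $p^r$ exploiting $p^{r-1}\neq 0$ in $W_r(\mathbb{F}_q)$, and the same final arithmetic comparison. The only difference is cosmetic: the paper uses $I+E_{12}$ (where $(E_{12})^2=0$ makes the binomial expansion trivial) rather than your regular unipotent $I+N$, which requires the extra observation that $v_p\binom{p^r}{j}=r$ for $1\leq j\leq n-1<p$.
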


\begin{proof}
    Let $E_{12}$ be the matrix with $1$ at the $(1,2)$-entry and zero everywhere else.  Given $g=I+E_{12} \in \bfG(W_{r}(\mathbb{F}_{q}))$  then $$g^{p^{r-1}}=(I+E_{12})^{p^{r-1}}= I +\binom{p^{r-1}}{1}E_{12}=I+p^{r-1}E_{12} \neq I,$$  since $(E_{12})^2=0$. By similar computations, we have $g^{p^r}=I$, so $g$ has order ${p^r}$. By \cref{lemma:bound-p}, $\mathrm{exp}_p(G'_r) \leq p^{r-1} \cdot \mathrm{exp}_p(G'_{1})=p^r$. Thus, assuming $p\geq n$ the $p$-exponent of $\bfG(W_{r}(\mathbb{F}_{q}))$ is $p^r$, as $\mathrm{exp}_p(G'_{1})=p$.  

    To compute the $p$-exponent of $G_r$, let $g$ be a p-element of $G_r$ and $z=\left\lceil \log_p(r) \right\rceil$. Note that $g^p \in K^1$, as $\mathrm{exp}_p(G_{1})=p$. As $p^z\geq r$ and $p=0$ in $\mathcal{O}_r$, we have

    $$g^{p\cdot p^z}=(I+\pi X)^{p^z}= I+ \pi^{p^z}X^{p^z}=I,$$
    for some matrix $X$ with entries over $\mathcal{O}_r$. So we conclude that $p^{\left\lceil \log_p(r) \right\rceil} \leq \mathrm{exp}_p(G_r) \leq p^{\left\lceil \log_p(r) \right\rceil +1}$. 

    To show the last statement, we need to show $p^r >p^{\left\lceil \log_p(r) \right\rceil +1}$ which is equivalent to $p^{r-2}\geq r$. For a fixed prime $p$, the function $f(r)=p^{r-2}-r$ is an increasing function for $r\geq 3$. Note that $f(3)\geq 0$ as long as $p\geq 3$. When $p=2$, $f(4)=0$. Thus, we conclude that if $r= 3$ and $p\geq 3$ or when $r\geq4$ and $p\geq2$ then $\mathrm{exp}_p(G'_r)> \mathrm{exp}_p(G_r)$.    
\end{proof}

\subsection{The $p$-exponent of the Groups $G_2$ and $G'_2$}

Note when $r=2$, \cref{Proposition:p-exponentr} shows that $p$-exponent of $G_2=\bfG(\mathbb{F}_{q}[t]/t^{2})$ is less than equal to $p^2$. Thus, we cannot conclude from \cref{Proposition:p-exponentr} that $\mathrm{exp}_p(G'_2)=p^2> \mathrm{exp}_p(G_2)$. In fact, see Remark 2.7 for an example when we have equality.  In this subsection, we will show that if $p\geq 2n$ then $\mathrm{exp}_p(G'_2)> \mathrm{exp}_p(G_2)$.

Note that any $p$-element $g$ of $\bfG(\mathcal{O}_2)$ maps to an element of a Sylow $p$-subgroup of $\bfG(\mathbb{F}_{q})$. Thus, without loss of generality, let $g=A(1+\pi X)$ where $A$ is an upper triangular matrix with $1$ along the diagonal and $X$ a matrix with entries over $\mathcal{O}_2$. In the following lemma, we compute the powers of such $g$. 
 
\begin{lemma}
\label{6proposition:order}
Let $g=A(I+\pi X)$ a matrix with entries over $\mathcal{O}_2$. For any integer $n$, we have that $$g^n=A^n+\pi\sum\limits_{i=0}^{n-1} A^{n-i}XA^i.$$
\end{lemma}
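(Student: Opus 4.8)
The plan is to exploit the single structural fact that makes the computation go through, namely that $\pi^2 = 0$ in $\mathcal{O}_2 = \mathcal{O}/\mathfrak{p}^2$. Writing $g = A(I + \pi X) = A + \pi A X$, I would expand any product of copies of $g$ and discard every term containing two or more factors of $\pi$, since all such terms vanish. Thus $g^n$ is the sum of the $\pi$-free term $A^n$ together with the terms that are linear in $\pi$, and the entire content of the lemma is the identification of that linear part.

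I would carry this out by induction on $n \geq 1$. The base case $n = 1$ is immediate, since $g = A + \pi A X = A^1 + \pi A^{1} X A^{0}$, matching the right-hand side. For the inductive step, assuming $g^n = A^n + \pi \sum_{i=0}^{n-1} A^{n-i} X A^i$, I would write $g^{n+1} = g\cdot g^n$ and expand:
\[
g^{n+1} = (A + \pi A X)\Bigl(A^n + \pi \sum_{i=0}^{n-1} A^{n-i} X A^i\Bigr).
\]
The cross term $\pi A X \cdot \pi \sum_i A^{n-i} X A^i$ carries a factor $\pi^2 = 0$ and drops out. What remains is $A^{n+1}$, the shifted sum $\pi \sum_{i=0}^{n-1} A^{(n+1)-i} X A^i$ coming from $A \cdot g^n$, and the single new term $\pi A X A^n = \pi A^{(n+1)-n} X A^n$ coming from $\pi A X \cdot A^n$. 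This last term is exactly the $i = n$ summand, so it extends the range of the sum to $0 \le i \le n$, giving $g^{n+1} = A^{n+1} + \pi \sum_{i=0}^{n} A^{(n+1)-i} X A^i$, which is the claim for $n+1$.

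Equivalently, and perhaps more transparently, I could expand $g^n$ directly: the $\pi$-linear part is obtained by placing a single factor $\pi A X$ in one of the $n$ slots, which for the $k$-th slot yields $A^{k-1}(\pi A X) A^{n-k} = \pi A^{k} X A^{n-k}$; summing over $k$ from $1$ to $n$ and reindexing by $i = n - k$ recovers $\pi \sum_{i=0}^{n-1} A^{n-i} X A^i$. I do not expect any genuine obstacle here: the computation is elementary once $\pi^2 = 0$ is invoked, and the only point requiring care is the bookkeeping of the summation index in the inductive step, namely tracking how the freshly produced term $\pi A X A^n$ fits in as the top summand. It is worth noting that the formula keeps $A$ on both sides of $X$ and so makes no use of commutativity between $A$ and $X$, and that the identity is meant for positive integers $n$ (the case $n=0$ being the trivial $g^0 = I$ with an empty sum), which is precisely the range relevant to the order computations that follow.
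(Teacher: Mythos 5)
Your proposal is correct and follows essentially the same route as the paper: an induction on $n$ in which $g^{n+1} = g\cdot g^n$ is expanded and the cross term is discarded because $\pi^2 = 0$ in $\mathcal{O}_2$. The bookkeeping of the new summand $\pi A X A^n$ as the $i=n$ term matches the paper's computation exactly.
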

  
 \begin{proof}
The lemma follows by induction, since 

\begin{equation*}
\begin{aligned}
     g^{n+1}=gg^n& = (A+ \pi Ax)(A^n + \pi\sum\limits_{i=0}^{n-1}A^{n-i}XA^i)\\
      & =A^{n+1} + \pi\sum\limits_{i=0}^{n-1}A^{n+1-i}XA^i+AXA^n \\
      & =  A^{n+1} + \pi\sum\limits_{i=0}^{n}A^{n+1-i}XA^i
  \end{aligned}
\end{equation*}
 as $\pi^2=0$.
\end{proof}

Thus, $g^p=A^p+\pi\sum\limits_{i=0}^{p-1} A^{p-i}XA^i $ and define $B:=\sum\limits_{i=0}^{p-1} A^{p-i}XA^i$. We prove that when $p\geq 2n$ the order of $g$ only depends on $A$, by showing that $$B=\sum\limits_{i=0}^{p-1} A^{p-i}XA^i=0 \bmod{\pi}.$$ We first need the following lemma.

\begin{lemma}
\label{6lemma:Chu}
Let $p\geq 2n$ then the sum $\sum\limits_{i=0}^{p-1}\binom{p-i}{k} \binom{i}{l} = 0 \bmod p $ for all $k,l<n$. 
\end{lemma}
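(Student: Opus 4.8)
The plan is to view the summand as a polynomial in $i$ of small degree and to apply the classical power-sum congruence modulo $p$. First I would clear denominators by setting
$$F(i) \;=\; k!\,l!\,\binom{p-i}{k}\binom{i}{l} \;=\; \Big(\prod_{j=0}^{k-1}(p-i-j)\Big)\Big(\prod_{j=0}^{l-1}(i-j)\Big),$$
so that $F$ is a polynomial in $i$ with \emph{integer} coefficients of degree exactly $k+l$. Since $k,l<n\le p/2<p$, the factor $k!\,l!$ is coprime to $p$; hence proving $\sum_{i=0}^{p-1}F(i)\equiv 0\pmod p$ is equivalent to the congruence asserted in the lemma, and working with integer coefficients makes the reductions modulo $p$ unambiguous.

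The hypothesis $p\ge 2n$ enters exactly once, through the degree bound: because $k,l<n$ we have $\deg F = k+l\le 2n-2\le p-2$. I would then invoke the standard fact that $\sum_{i=0}^{p-1} i^{d}\equiv 0\pmod p$ for every $d$ with $0\le d\le p-2$ (for $d=0$ the sum is $p$; for $1\le d\le p-2$ one chooses a primitive root $g$ modulo $p$ and sums the geometric series $\sum_{j=0}^{p-2}g^{jd}$, which vanishes since $g^{d}\not\equiv 1$). Writing $F(i)=\sum_{d=0}^{k+l}c_d\, i^{d}$ with $c_d\in\mathbb{Z}$ and summing over the complete residue system $i=0,\dots,p-1$, linearity gives $\sum_{i=0}^{p-1}F(i)=\sum_{d=0}^{k+l}c_d\sum_{i=0}^{p-1} i^{d}\equiv 0\pmod p$, since every exponent $d\le k+l\le p-2$ lies in the vanishing range. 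Dividing out the unit $k!\,l!$ then yields $\sum_{i=0}^{p-1}\binom{p-i}{k}\binom{i}{l}\equiv 0\pmod p$.

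The main point to get right is the degree accounting, not any individual computation: the argument is sharp, and at the boundary $k=l=n-1$ one has $\deg F = 2n-2$, which stays $\le p-2$ precisely because $p\ge 2n$. Indeed, if $p=2n-1$ then $k=l=n-1$ forces the exponent $d=p-1$, where $\sum_{i} i^{p-1}\equiv -1$ by Fermat and the sum is nonzero, so the hypothesis cannot be relaxed. A second, more combinatorial route, presumably the one indicated by the label, uses the Chu--Vandermonde convolution $\sum_{i=0}^{m}\binom{i}{l}\binom{m-i}{k}=\binom{m+1}{k+l+1}$ with $m=p$: the sum in the lemma differs from $\binom{p+1}{k+l+1}$ only by the $i=p$ term $\binom{p}{l}\binom{0}{k}$, after which one reduces $\binom{p+1}{k+l+1}$ modulo $p$ via Pascal's rule together with $\binom{p}{j}\equiv 0\pmod p$ for $1\le j\le p-1$, handling the case $k=l=0$ (equivalently $k+l+1=1$) separately. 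I would nonetheless present the power-sum argument as the main proof, since it exhibits the role of $p\ge 2n$ most transparently.
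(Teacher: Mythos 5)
Your argument is correct, but it takes a genuinely different route from the paper. The paper applies the Chu--Vandermonde convolution $\sum_{m=0}^{s}\binom{m}{j}\binom{s-m}{r-j}=\binom{s+1}{r+1}$ with $s=p$, $r=k+l$, $j=l$ to identify the sum (extended to $i=p$) with $\binom{p+1}{k+l+1}$, and then observes that this binomial coefficient vanishes mod $p$ because $k+l+1\le 2n-1<p$; your power-sum argument instead expands $k!\,l!\binom{p-i}{k}\binom{i}{l}$ as an integer polynomial in $i$ of degree $k+l\le 2n-2\le p-2$ and invokes $\sum_{i=0}^{p-1}i^{d}\equiv 0 \pmod p$ for $0\le d\le p-2$. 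Both are sound, and your version has two advantages: it makes the role of the hypothesis $p\ge 2n$ completely transparent (the degree must stay below $p-1$, and your sharpness remark at $k=l=n-1$, $p=2n-1$ is exactly right), and it treats all $k,l$ uniformly. The paper's route, as written, actually elides the boundary case $k=l=0$: there $\binom{p+1}{1}=p+1\equiv 1\pmod p$, and the identity only recovers the lemma because the extra $i=p$ term $\binom{0}{0}\binom{p}{0}=1$ accounts for the discrepancy (the truncated sum is $p\equiv 0$); you correctly flag that this case needs separate handling in the combinatorial route. One could quibble that the paper's Chu--Vandermonde computation is shorter once the identity is quoted, but your proof is self-contained and arguably cleaner.
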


 \begin{proof}
We note the following form of the Chu–Vandermonde identity \text{\cite[Equation 6.16]{jaynes2003probability}},  $$\sum\limits_{m=0}^{s}\binom{m}{j} \binom{s-m}{r-j}=\binom{s+1}{r+1}$$ which holds for any integers $j$, $r$, and $s$ satisfying $0 \leq j \leq r \leq s$. By letting $p=s$, $k+l=r$ and $j=l$, we have  $$\sum\limits_{i=0}^{p}\binom{p-i}{k} \binom{i}{l} =\binom{p+1}{k+l+1}.$$ Since $k+l+1 \leq 2n-1 < p$, $$\sum\limits_{i=0}^{p}\binom{p-i}{k} \binom{i}{l} =\binom{p+1}{k+l+1}= 0 \bmod p.$$
\end{proof}

Note that \cref{6lemma:Chu} is why we assume that $p\geq 2n$ when $r=2$. We use the previous lemma to prove the following. 

 \begin{lemma}
 \label{6proposition:B=0}
 Let $p\geq 2n$ and $A$ an upper triangular matrices of $\mathrm{M}_n(\mathcal{O}_2)$ with $1$'s along the diagonal. We have that $B:=\sum\limits_{i=0}^{p-1} A^{p-i}XA^i$ is equal to $0 \bmod \pi$.
  \end{lemma}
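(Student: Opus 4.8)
The plan is to exploit the unipotence of $A$. Writing $A = I + N$ with $N$ strictly upper triangular, the fact that $N$ is $n \times n$ gives $N^n = 0$, so the binomial theorem truncates: for every exponent $j$ one has $A^j = \sum_{k=0}^{n-1}\binom{j}{k}N^k$, the coefficients $\binom{j}{k}$ being integer scalars and $I, N$ commuting (terms with $k > j$ contribute nothing since $\binom{j}{k}=0$, and terms with $k \geq n$ vanish since $N^k = 0$). I would record this truncated expansion for both $A^{p-i}$ and $A^i$.

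Next I would substitute these two expansions into $B = \sum_{i=0}^{p-1} A^{p-i}XA^i$ and interchange the (finite) order of summation, collecting the coefficient of each matrix $N^k X N^l$:
$$B = \sum_{k=0}^{n-1}\sum_{l=0}^{n-1}\left(\sum_{i=0}^{p-1}\binom{p-i}{k}\binom{i}{l}\right) N^k X N^l.$$
This isolates the purely combinatorial scalar $c_{k,l} := \sum_{i=0}^{p-1}\binom{p-i}{k}\binom{i}{l}$ as the coefficient of $N^k X N^l$, with $k$ and $l$ running exactly over the range $k, l < n$, which is precisely the hypothesis range of \Cref{6lemma:Chu}.

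I would then invoke \Cref{6lemma:Chu}, valid since $p \geq 2n$, to conclude that $c_{k,l} \equiv 0 \pmod p$ for all $0 \leq k, l < n$. Finally, since the residue field $\mathcal{O}_2/\pi \mathcal{O}_2 = \mathbb{F}_q$ has characteristic $p$, the integer $p$ lies in $\pi\mathcal{O}$, so each scalar $c_{k,l}$ vanishes modulo $\pi$; hence every term of the double sum is congruent to $0 \bmod \pi$, and therefore $B \equiv 0 \bmod \pi$, as claimed.

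I do not expect a serious obstacle: the substantive combinatorics has been relegated to \Cref{6lemma:Chu}, and what remains is bookkeeping. The two points deserving care are (i) justifying that the binomial expansion genuinely terminates at degree $n-1$ — this rests solely on $N^n = 0$ for an $n \times n$ strictly upper triangular matrix — so that the index range of the resulting double sum matches the lemma exactly; and (ii) the passage from ``$\equiv 0 \bmod p$'' to ``$\equiv 0 \bmod \pi$'', which is just the observation that $p = 0$ in the characteristic-$p$ residue field.
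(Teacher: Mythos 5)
Your argument is correct and is essentially identical to the paper's own proof: the paper likewise writes $A = I + J$ with $J$ strictly upper triangular, expands $A^m = \sum_k \binom{m}{k}J^k$, truncates at $k,l < n$ using $J^n = 0$, identifies the coefficient of $J^k X J^l$ as $\sum_{i=0}^{p-1}\binom{p-i}{k}\binom{i}{l}$, and kills it with \cref{6lemma:Chu}. Your explicit remark on the passage from $0 \bmod p$ to $0 \bmod \pi$ is a slight refinement of the paper's one-line closing observation, but the route is the same.
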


 \begin{proof}
Since $A$ is an upper triangular matrix with $ 1$'s along the diagonal, we can write $A=I+J$ where $I$ is the identity matrix and $J$ is a strictly upper triangular matrix. Since the matrix $I$ and $J$ commute, we have that $$A^m=(I+J)^m=\sum\limits_{k=0}^{m} \binom{m}{k} J^k.$$ Thus, $$B=\sum\limits_{i=0}^{p-1} A^{p-i}XA^i=\sum\limits_{i=0}^{p-1} (\sum\limits_{k=0}^{p-i} \binom{p-i}{k} J^k)X( \sum\limits_{l=0}^{i} \binom{i}{l})J^l)=\sum\limits_{i=0}^{p-1}\sum\limits_{k=0}^{p-i} \sum\limits_{l=0}^{i} \binom{p-i}{k} \binom{i}{l} J^k X J^l.$$ It is enough to consider $k,l<n$ as $J^n=0$ for a strictly upper triangular matrix. For a fixed $k$ and $l$, the coefficient of $J^k X J^l$ is $$\sum\limits_{i=0}^{p-1}\binom{p-i}{k} \binom{i}{l}= 0 \bmod p$$ by \cref{6lemma:Chu}, as $k,l<n$. Thus $B=0 \bmod \pi$ as the characteristic of the field is $p$.
\end{proof}




We thus compute the $p$-exponent of the group $G_2$. 

\begin{proposition}
\label{Proposition:p-exponent}
  Let $p \geq 2n$, then the $p$-exponent of $\bfG(W_{2}(\mathbb{F}_{q}))$  is $p^2$ and that of $\bfG(\mathbb{F}_{q}[t]/t^{2})$ is $p$.   
\end{proposition}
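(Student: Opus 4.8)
The plan is to treat the two groups separately, drawing on the machinery already assembled earlier in this section. The statement for $G'_2 = \bfG(W_2(\mathbb{F}_{q}))$ requires no new argument: since $p \geq 2n \geq n$, \cref{Proposition:p-exponentr} applied with $r = 2$ gives at once that $\mathrm{exp}_p(G'_2) = p^2$. Hence the real content of the proposition is the claim $\mathrm{exp}_p(G_2) = p$ for $G_2 = \bfG(\mathbb{F}_{q}[t]/t^{2})$, and I would establish this by proving the two inequalities $\mathrm{exp}_p(G_2) \leq p$ and $\mathrm{exp}_p(G_2) \geq p$.

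For the upper bound I would take an arbitrary $p$-element $g \in G_2$ and invoke the reduction to Sylow $p$-subgroups recorded before \cref{6proposition:order}: since the order of an element is a conjugacy invariant and every $p$-element of $\bfG(\mathbb{F}_{q})$ is conjugate into the upper unitriangular Sylow $p$-subgroup, we may assume $g = A(I + \pi X)$ with $A$ upper triangular having $1$'s on the diagonal and $X$ over $\mathcal{O}_2$. Then \cref{6proposition:order} yields $g^p = A^p + \pi B$ with $B = \sum_{i=0}^{p-1} A^{p-i} X A^i$. By \cref{6proposition:B=0} the hypothesis $p \geq 2n$ forces $B \equiv 0 \bmod \pi$, so $B = \pi C$ for some $C$, and since $\pi^2 = 0$ in $\mathcal{O}_2$ we get $\pi B = \pi^2 C = 0$. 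Therefore $g^p = A^p$.

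It then remains to verify $A^p = I$. Writing $A = I + J$ with $J$ strictly upper triangular and expanding $A^p = \sum_{k=0}^p \binom{p}{k} J^k$, the terms with $0 < k < p$ vanish because $\binom{p}{k} \equiv 0 \bmod p$ and $\mathcal{O}_2$ has characteristic $p$, while the top term $J^p$ vanishes since $J^n = 0$ and $p \geq n$. Hence $A^p = I$ and so $g^p = I$, giving $\mathrm{exp}_p(G_2) \leq p$. For the matching lower bound I would note that reduction modulo $t$ gives a surjection $G_2 \twoheadrightarrow G_1 = \bfG(\mathbb{F}_{q})$, whence $\mathrm{exp}_p(G_2) \geq \mathrm{exp}_p(G_1) = p$; alternatively the single element $I + E_{12}$ already has order exactly $p$ because $E_{12}^2 = 0$. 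Combining the two bounds gives $\mathrm{exp}_p(G_2) = p$.

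I expect no serious obstacle at this stage, because the essential difficulty of the whole proposition, namely controlling the off-diagonal correction term $B$, has already been isolated into \cref{6lemma:Chu} and \cref{6proposition:B=0}, where the hypothesis $p \geq 2n$ genuinely enters through the Chu–Vandermonde computation. The only points that demand care in the write-up are the justification that a conjugating element of $\bfG(\mathbb{F}_{q})$ may be lifted to $\bfG(\mathcal{O}_2)$ (so that the normal form $g = A(I + \pi X)$ is legitimate for both $\GL_n$ and $\SL_n$, using that the unitriangular group is a common Sylow $p$-subgroup), and the bookkeeping that $B \equiv 0 \bmod \pi$ upgrades to the exact identity $\pi B = 0$ in $\mathcal{O}_2$.
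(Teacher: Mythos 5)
Your proposal is correct and follows essentially the same route as the paper: quote \cref{Proposition:p-exponentr} for $G'_2$, reduce to the normal form $g=A(I+\pi X)$, and apply \cref{6proposition:order} together with \cref{6proposition:B=0} to get $g^p=A^p$. The only (harmless) divergence is the final step: you verify $A^p=I$ directly from the binomial expansion of $(I+J)^p$ in characteristic $p$, whereas the paper instead uses the splitting of $1 \to K^1 \to G_2 \to G_1 \to 1$ to choose $A$ of order $p$ in a section of $G_1$; both settle the point.
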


\begin{proof}

Note that by \cref{Proposition:p-exponentr} the $p$-exponent of $G'_2$ is $p^2$. Let $g$ be a $p$-element of $G_2$. Without loss of generality, assume that $g=A(I+\pi X)$ where $A$ is an upper triangular matrix with 1's along the diagonal. 
By \cref{6proposition:order}, $$g^p=A^p+\pi\sum\limits_{i=0}^{p-1} A^{p-i}XA^i.$$ Since $p\geq 2n$,  by \cref{6proposition:B=0} we have that $B=\sum\limits_{i=0}^{p-1} A^{p-i}XA^i=0 \bmod \pi$  thus $g^p=A^p.$
 
Note that the extension  $$I \to K^1 \to G_2 \to G_1 \to I$$ split. Since for $p \geq n$ the $p$-exponent of $G_1=\bfG(\mathbb{F}_{q})$ is $p$, we can take $A$ to be a matrix of order $p$ by a choice of section from $G_1$. Therefore, $g$ also has order $p$ and the $p$-exponent of $G_2$ is $p$. 
\end{proof}
\begin{remark}
    \cref{Proposition:p-exponent} shows that the groups $\bfG(\mathbb{F}_{q}[t]/t^{2})$ and $\bfG(W_{2}(\mathbb{F}_{q}))$ are not isomorphic when $p\geq 2n$, as the two groups have different $p$-exponent. Note that if $p < 2n$, then it is possible for the $p$-exponent of $\bfG(\mathbb{F}_{q}[t]/t^{2})$ to be $p^2$ and thus the same as the $p$-exponent of $\bfG(W_{2}(\mathbb{F}_{q}))$. For example, when $n=3$ in $\bfG(\mathbb{F}_{5}[t]/t^{2})$ the matrix $\begin{pmatrix}1 & 1 & 0\\
    t & 1 & 1\\
    t & 0 & 1\\
\end{pmatrix}$ has order $25$. Thus the $p$-exponent of $\bfG(\mathbb{F}_{5}[t]/t^{2})$ is $25$, the same as the the $p$-exponent of $\bfG(W_{2}(\mathbb{F}_{5}))$.

    
\end{remark}


\section{The Defining Characteristic Case of the Representations of $G_r$ and $G'_r$}
\label{section:defining}

Throughout this section, $F$ denotes a perfect field of characteristic $p > 0$. We use the fact that $\bfG(\mathcal{O}_r)$ and $\bfG(\mathcal{O}'_r)$ have different $p$-exponent in general to show that over a perfect field $F$ of characteristic $p$ the two group algebras are not stably equivalent of Morita type. 

For an algebra $A$ over a field $F$ of characteristic $p$, we let $[A,A]$ denote the subspace spanned by commutators in $A$. We let  $$T_n(A):=\{x \in A: x^{p^n}\in [A,A]\} $$ and 
\[ 
T(A):= \bigcup_{n=0}^{\infty}T_n(A), 
\]
which are called the Külshammer spaces of $A$. We summarize some of the general properties of the Külshammer spaces in the following lemma. 


  




\begin{lemma} 
\cite[Proposition 2.9.5]{zimmermann2014representation}
Let $F$ be a field of characteristic $p > 0$ and $A$ be a finite-dimensional associative unitary $F$ algebra. Then $T_n(A)$ is a $F$-subspace of $A$ and
\[ 
[A, A] = T_0(A) \subseteq T_1(A) \subseteq T_2(A) \subseteq \dots \subseteq T(A)= Rad(A) + [A,A]
\] is an increasing sequence of $F$-subspaces of $A$.
\end{lemma}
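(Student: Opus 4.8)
The plan is to isolate the one nontrivial input---that the $p$-power map is additive modulo commutators---and then deduce every structural claim formally. First I would record the identity that in any associative $F$-algebra with $\operatorname{char} F = p$ one has $(x+y)^p \equiv x^p + y^p \pmod{[A,A]}$ for all $x,y \in A$, together with $(\lambda x)^p = \lambda^p x^p$ for $\lambda \in F$. The additivity follows by expanding $(x+y)^p$ as a sum over all length-$p$ words in $x$ and $y$ and grouping the words into orbits under cyclic rotation: since $ab \equiv ba \pmod{[A,A]}$, cyclically rotated words are congruent, each orbit has size $1$ or $p$, the only size-$1$ orbits give $x^p$ and $y^p$, and every size-$p$ orbit contributes $p$ congruent terms, which vanish in characteristic $p$. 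I would also note the elementary identity $(ab)^m - (ba)^m = [a(ba)^{m-1}, b] \in [A,A]$; combined with the additivity this shows that $[A,A]$ is stable under $p$-th powers. Consequently the assignment $x + [A,A] \mapsto x^p + [A,A]$ is a well-defined $p$-semilinear endomorphism $\phi$ of the quotient space $\bar A := A/[A,A]$.

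With $\phi$ in hand the structural statements are immediate. Since $p^0 = 1$, $T_0(A) = \{x : x \in [A,A]\} = [A,A]$. For $n \geq 0$ one has $T_n(A) = \{x \in A : \phi^n(x + [A,A]) = 0\}$, so $T_n(A)/[A,A] = \ker(\phi^n)$; because $\phi^n$ is additive and satisfies $\phi^n(\lambda v) = \lambda^{p^n}\phi^n(v)$, its kernel is an $F$-subspace, whence $T_n(A)$ is an $F$-subspace of $A$ containing $[A,A]$. The inclusion $T_n(A) \subseteq T_{n+1}(A)$ is just $\ker(\phi^n) \subseteq \ker(\phi^{n+1})$, which follows from $\phi^{n+1} = \phi \circ \phi^n$. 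Hence $T(A) = \bigcup_n T_n(A)$ is an increasing union of $F$-subspaces (stabilizing by finite-dimensionality) and is itself an $F$-subspace.

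It remains to identify $T(A)$ with $\operatorname{Rad}(A) + [A,A]$. For the inclusion $\supseteq$, note $[A,A] = T_0(A) \subseteq T(A)$, while every $x \in \operatorname{Rad}(A)$ is nilpotent (the radical of a finite-dimensional algebra is nilpotent), so $x^{p^n} = 0 \in [A,A]$ for large $n$ and thus $x \in T(A)$; as $T(A)$ is a subspace the whole sum lies in $T(A)$. For the reverse inclusion I would pass to the semisimple quotient $\bar B := A/\operatorname{Rad}(A)$, under which $\operatorname{Rad}(A) + [A,A]$ is exactly the preimage of $[\bar B, \bar B]$; since $x^{p^n}\in[A,A]$ forces $\bar x^{\,p^n} \in [\bar B,\bar B]$, it suffices to prove $T(\bar B) = [\bar B,\bar B]$ for $\bar B$ semisimple.

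The main obstacle is this last step. I would decompose $\bar B$ by Wedderburn into a product of simple algebras and reduce to a single factor $M_m(D)$, central simple over its center $E \supseteq F$. The inclusion $[\bar B,\bar B]\subseteq T(\bar B)$ is clear; for the converse the key is that $[M_m(D), M_m(D)]$ is the kernel of the reduced trace $\operatorname{Trd}$, and that in characteristic $p$ the trace intertwines the Frobenius. Passing to a splitting field and using that $y \mapsto y^{p^n}$ is a ring homomorphism on the algebraic closure, the eigenvalue computation gives $\operatorname{Trd}(x^{p^n}) = \operatorname{Trd}(x)^{p^n}$, which vanishes if and only if $\operatorname{Trd}(x) = 0$ (the Frobenius being injective on the field $E$). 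Thus $\bar x^{\,p^n} \in [\bar B,\bar B]$ forces $\bar x \in [\bar B,\bar B]$, completing the identification. The delicate points are the reduced-trace characterization of the commutator space of a central simple algebra and the validity of the splitting-field argument, both of which are unproblematic over the perfect field $F$ fixed in this section.
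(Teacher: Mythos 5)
The paper does not prove this lemma; it is quoted verbatim from \cite[Proposition 2.9.5]{zimmermann2014representation}, so there is no in-paper argument to compare against. Your proof is correct and is essentially the standard one found in that reference and in K\"ulshammer's original work: the cyclic-orbit argument gives the $p$-semilinear Frobenius on $A/[A,A]$, which yields the subspace and chain claims, and the identification $T(A)=\operatorname{Rad}(A)+[A,A]$ via the semisimple quotient and the (reduced) trace characterization of the commutator space is the expected route; note also that your argument nowhere actually needs $F$ perfect, consistent with the lemma's hypotheses.
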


Let $A$ be a symmetric $F$-algebra. Then there is a non-degenerate associative symmetric bilinear form $( \quad,\quad )_A : A \times A \to F.$  Given a subspace $V$ of $A$ we define $V^\perp=\{x \in A \, |\, (x,v)=0, \,\text{for all} \,v \,\text{in} \,V\}$. Let $R(A) := Z(A)\cap \mathrm{soc}(A)$ be the Reynolds ideal of $A$, where $Z(A)$ is the center of $A$ and $\mathrm{soc}(A)$ is the socle of $A$. Note that the group algebra $FG$ is a symmetric $F$-algebra where $(g,h)=\delta_{g,h^{-1}}$ for $g$, $h$ in $G$ and $\delta$ is the Kronecker symbol.

\begin{lemma} 
\cite[Proposition 2.9.5]{zimmermann2014representation}
Let $F$ be a field of characteristic $p > 0$ and $A$ be a finite-dimensional associative unitary $F$ algebra. Then $T_n(A)^{\perp}$ is an ideal of $Z(A)$ and
\[ 
R(A) = T(A)^ \perp = \bigcap_{n=0}^{\infty} T_n(A)^{\perp} \subseteq \dots \subseteq T_1(A)^{\perp}\subseteq T_0(A)^{\perp}= Z(A)
\] is an increasing sequence of ideals of $Z(A)$. 
\end{lemma}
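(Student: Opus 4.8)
The plan is to work entirely through the duality $V \mapsto V^\perp$ furnished by the non-degenerate symmetric associative form $(\,\cdot\,,\,\cdot\,)_A$ on the symmetric algebra $A$, turning the ascending chain for the spaces $T_n(A)$ from the preceding lemma into the asserted ascending chain for their orthogonals. I first record the formal properties of $\perp$ I will reuse: it reverses inclusions, the orthogonal of a sum of subspaces equals the intersection of their orthogonals, and non-degeneracy gives $(V^\perp)^\perp = V$ together with $\dim_F V + \dim_F V^\perp = \dim_F A$. Applying $\perp$ to $[A,A] = T_0(A) \subseteq T_1(A) \subseteq \cdots \subseteq T(A)$ reverses it to $T(A)^\perp \subseteq \cdots \subseteq T_1(A)^\perp \subseteq T_0(A)^\perp$, and since $T(A) = \bigcup_{n} T_n(A) = \sum_n T_n(A)$ we get $T(A)^\perp = \bigcap_n T_n(A)^\perp$.

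Next I identify the two endpoints. For the right end I claim $T_0(A)^\perp = [A,A]^\perp = Z(A)$: associativity and symmetry of the form yield, for all $a,b \in A$,
\[
(xa - ax,\, b) = (x, ab) - (x, ba) = (x, ab - ba),
\]
so by non-degeneracy $x$ is central if and only if it is orthogonal to every commutator, giving $[A,A]^\perp = Z(A)$. For the left end, the preceding lemma gives $T(A) = Rad(A) + [A,A]$, hence $T(A)^\perp = Rad(A)^\perp \cap [A,A]^\perp$; combining $[A,A]^\perp = Z(A)$ with the socle--radical duality $Rad(A)^\perp = \mathrm{soc}(A)$ valid for symmetric algebras produces $T(A)^\perp = \mathrm{soc}(A) \cap Z(A) = R(A)$.

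It then remains to see that each $T_n(A)^\perp$ is an ideal of $Z(A)$. The chain already shows $T_n(A)^\perp \subseteq T_0(A)^\perp = Z(A)$, and as $Z(A)$ is commutative it suffices to prove $Z(A) \cdot T_n(A)^\perp \subseteq T_n(A)^\perp$. For $z \in Z(A)$, $x \in T_n(A)^\perp$ and $t \in T_n(A)$, associativity and symmetry give $(zx, t) = (x, tz)$, so I only need $tz \in T_n(A)$, i.e. that $T_n(A)$ is stable under right multiplication by central elements. This holds because centrality of $z$ gives $(tz)^{p^n} = t^{p^n} z^{p^n}$, while $[A,A]$ absorbs central elements — from the identity $[a,b]\,z = [az, b]$ — so $t^{p^n} \in [A,A]$ forces $t^{p^n} z^{p^n} \in [A,A]$, that is $tz \in T_n(A)$.

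The only ingredient that is not pure bilinear-form bookkeeping is the socle--radical duality $Rad(A)^\perp = \mathrm{soc}(A)$, and this is precisely where the symmetric (Frobenius) structure is essential; I expect it to be the main point to justify. I would either cite it as a standard property of symmetric algebras or establish it directly by identifying $\mathrm{soc}(A)$ with the annihilator of $Rad(A)$ and observing that, under the self-duality induced by the form, annihilators of two-sided ideals correspond to their orthogonal complements.
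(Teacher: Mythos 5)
The paper does not prove this lemma at all: it is quoted verbatim from the literature (Külshammer's results as presented in \cite[Proposition 2.9.5]{zimmermann2014representation}), so there is no in-paper argument to compare against. Your proposal is a correct self-contained derivation, and every step checks out: the identity $(xa-ax,b)=(x,ab-ba)$ does give $[A,A]^\perp=Z(A)$; the reversal of the chain $T_0(A)\subseteq T_1(A)\subseteq\cdots$ under $\perp$ together with $\bigl(\sum_n T_n(A)\bigr)^\perp=\bigcap_n T_n(A)^\perp$ gives the displayed chain; the ideal property follows correctly from $(zx,t)=(x,tz)$ once you know $T_n(A)\cdot Z(A)\subseteq T_n(A)$, and your verification of that via $(tz)^{p^n}=t^{p^n}z^{p^n}$ and $[a,b]z=[az,b]$ is exactly right. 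The one point you defer, $\mathrm{Rad}(A)^\perp=\mathrm{soc}(A)$, is indeed the only place the symmetric structure is used beyond bookkeeping; your sketch (orthogonals of one-sided ideals are annihilators, $\mathrm{soc}(A)=\mathrm{ann}(\mathrm{Rad}(A))$, and left and right socles coincide for symmetric algebras) is the standard argument and closes the gap, though a complete write-up should either carry it out or cite it explicitly. Note also that your argument takes as input $T(A)=\mathrm{Rad}(A)+[A,A]$ from the preceding lemma, which the paper likewise only cites; if the goal were a fully self-contained treatment that equality would also need proof, but as a proof of the stated lemma given its companion, your argument is complete.
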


The ideal $T_n(A)^{\perp}$ is called the $n$-th Külshammer ideal of $A$. For more properties on the Külshammer space and ideals, one can consult the original writing of Külshammer \cite{zbMATH00067401} or \cite[Section 2.9]{zimmermann2014representation}. For a group algebra $FG$ there exists an explicit description of the $n$-th Külshammer ideal of $FG$; $T_n(FG)^{\perp}$ has a basis given by 

$$ \bigg\{\sum_{g \in C^{p^{-n}}} g \, |\, C \in Cl(G)\, and \,C^{p^{-n}}\neq \emptyset \bigg\}
$$

where $Cl(G)$ denotes the set of conjugacy classes of $G$ and $C^{p^{-n}} := \{h \in G \,| \,h^{p^{n}} \in C\}$, see \cite[Lemma 2.9.20]{zimmermann2014representation}.

\begin{proposition}
\label{6proposition:pKul}
\cite[(20)]{zbMATH00067401} Given a finite group $G$ and $F$ a field of characteristic $p > 0$, the $p$-exponent of $G$ is equal to the quantity $min\{p^n:T_n(FG)^{\perp}=T(FG)^{\perp}=R(FG)\}$.  
\end{proposition}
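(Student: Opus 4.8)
The plan is to translate the entire statement into the combinatorics of the $p^n$-power map on conjugacy classes, using the explicit basis of $T_n(FG)^{\perp}$ recalled just above. Since $(hgh^{-1})^{p^n}=h g^{p^n} h^{-1}$, the assignment $[g]\mapsto[g^{p^n}]$ defines a self-map $\psi_n$ of the finite set $Cl(G)$, and evidently $\psi_n=\psi_1^{\,n}$. The set of $p^n$-th powers is conjugation-invariant, so it meets a class $C$ iff it contains $C$; hence $C^{p^{-n}}\neq\emptyset$ precisely when $C\in\operatorname{image}(\psi_n)$. By the stated basis of $T_n(FG)^{\perp}$ we therefore have
\[
\dim_F T_n(FG)^{\perp}=\bigl|\operatorname{image}(\psi_n)\bigr|.
\]
Because the ideals $T_n(FG)^{\perp}$ form a decreasing chain with intersection $R(FG)=T(FG)^{\perp}$, matching dimensions of two comparable terms forces equality, so locating $\min\{p^n:T_n(FG)^{\perp}=R(FG)\}$ amounts to finding the first $n$ at which $\operatorname{image}(\psi_n)$ reaches its eventual value $\bigcap_{k}\operatorname{image}(\psi_k)$.

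First I would identify this eventual image. Write $\mathrm{exp}_p(G)=p^m$ and decompose each $g=g_p g_{p'}$ into commuting $p$- and $p'$-parts. For $p^n\geq p^m$ the order of $g_p$ divides $p^n$, so $g^{p^n}=g_{p'}^{\,p^n}$ is a $p'$-element; conversely every $p'$-element $h$ is a $p^n$-th power, since $\gcd(p^n,|h|)=1$ lets me solve $ap^n\equiv 1 \pmod{|h|}$ and set $h=(h^a)^{p^n}$. Hence for every $n\geq m$,
\[
\operatorname{image}(\psi_n)=\{\,[h] : h \text{ a } p'\text{-element of } G\,\},
\]
a set independent of $n$. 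As the chain $T_n(FG)^{\perp}$ is decreasing and these dimensions are equal for all $n\geq m$, it is constant there and equals its intersection; thus $T_n(FG)^{\perp}=R(FG)$ for all $n\geq m$, which already gives $\min\{p^n:T_n(FG)^{\perp}=R(FG)\}\leq p^m$, and shows the displayed set is the eventual image.

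It remains to rule out earlier stabilization, and this is the step requiring a concrete witness. Since $\mathrm{exp}_p(G)=p^m$ is the exponent of a Sylow $p$-subgroup, there is a $p$-element $u$ of order exactly $p^m$. Then $u^{p^{m-1}}$ has order $p$, so $[u^{p^{m-1}}]\in\operatorname{image}(\psi_{m-1})$ is the class of a nontrivial $p$-element, which is not one of the $p'$-classes above. By monotonicity $\operatorname{image}(\psi_{m-1})\supseteq\operatorname{image}(\psi_m)$, so this inclusion is strict, giving $T_{m-1}(FG)^{\perp}\supsetneq T_m(FG)^{\perp}=R(FG)$; monotonicity of the chain then yields $T_n(FG)^{\perp}\supsetneq R(FG)$ for every $n\leq m-1$. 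Combining with the previous paragraph, $T_n(FG)^{\perp}=T(FG)^{\perp}=R(FG)$ holds exactly for $n\geq m$, so $\min\{p^n:T_n(FG)^{\perp}=T(FG)^{\perp}=R(FG)\}=p^m=\mathrm{exp}_p(G)$.

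I expect the main obstacle to be the reduction itself rather than the number theory: one must be careful that the stated basis really gives $\dim_F T_n(FG)^{\perp}=|\operatorname{image}(\psi_n)|$, and that comparability plus equal dimension in the nested chain forces equality of ideals, so that the problem genuinely collapses to tracking the image of the power map. Once that reduction is secured, both inequalities for $m$ follow cleanly from the $p$-/$p'$-decomposition and from the explicit element $u$ of maximal $p$-power order.
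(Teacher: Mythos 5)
Your proof is correct and follows essentially the same route as the paper: both arguments reduce the statement to the conjugacy-class basis of $T_n(FG)^{\perp}$ together with the $p$-$p'$ decomposition, identifying stabilization of the chain with the first $n$ for which every $p^n$-th power is a $p'$-element. Your write-up is somewhat more explicit than the paper's, in particular in ruling out earlier stabilization via an element of order exactly $\mathrm{exp}_p(G)$, a step the paper leaves implicit.
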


\begin{proof}




Let $p^n$ be the $p$-exponent of $G$. Uniquely decomposing each $g$ in $G$ into commuting elements $g_p$ and $g_{p'}$ of order $p$ and $p'$ respectively, we have $g^{p^n} = g_p^{p^n}g_{p'}^{p^n}$. For $g$ and $h$ in $G$ we have that $g^{p^n}$ and $h^{p^n}$ are conjugates in $G$ if and only if  $g^{p^i}$ and $h^{p^i}$ are conjugates in $G$ for any $i\geq n $. Then by the explicit description of the Külshammer ideal of $FG$, we see that $T_i(FG)^{\perp} = T(FG)^{\perp}$ if and only if the order of $g^{p^i}$ is not divisible by $p$ for all $g$ in $G$. Therefore, the result follows as $T(FG)^{\perp}=R(FG)$.         
\end{proof}

\subsection{Equivalence of categories}\label{categories}

We first recall the definition of a stable equivalence of Morita type.

\begin{definition}
      \cite{broue1994equivalences} Let $F$ be a commutative ring and let $A$ and $B$ be $F$-algebras.
Let $M$ be a $B$–$A$-bimodule and let $N$ be an $A$–$B$-bimodule such that:
\begin{enumerate}
    \item 
    \begin{enumerate}
        \item $M$ is projective as a $B$-module and as an $A^{op}$-module.
        \item $N$ is projective as a $B^{op}$-module and as an $A$-module.
    \end{enumerate}

\item 
\begin{enumerate}
    \item $M \otimes_{A} N \simeq B \oplus Q$ as $B$–$B$-bimodules, where $Q$ is a finitely generated
projective $B$–$B$-bimodule.

\item $N \otimes_{B} M \simeq A \oplus P$ as $A$–$A$-bimodules, where $P$ is a finitely generated
projective $A$–$A$-bimodule.
\end{enumerate}

\end{enumerate}
Then we say that $(M,N)$ induces a stable equivalence of Morita type between $A$
and $B$.

\end{definition}

Let $A^e := A \otimes_k A^{op}$. The center of an algebra $A$ can be considered as the set of homomorphisms as $A^e$-modules from $A$ to $A$, that is, $Z(A) = Hom_{A^e} (A, A)$. The projective center $Z^{pr}(A) \subseteq Z(A)$ is defined as the subset of homomorphisms that factor through projective $A^e$-modules. The stable center $Z^{st}(A)$ is defined as the quotient $Z(A)/Z^{pr}(A)$. Note that by \cite[ Lemma 4.1 (iii)]{hethelyi2005central} and \cite[Proposition 1.3]{Projective}, $Z^{pr}(A)\subseteq R(A) \subseteq T_n(A)^{\perp}$ for a symmetric $F$-algebra $A$.


We now prove Theorem A, which we restate here for the reader's convenience. 

\begin{theorem A}
\label{Theorem:main}
Let $\bfG$ be either $\GL_{n}$ or $\SL_{n}$ and $F$ be a perfect field of the same characteristic $p$ as $\mathbb{F}_{q}$. Given $p \geq n$, the group algebras $F[\bfG(W_{r}(\mathbb{F}_{q}))]$ and $F[\bfG(\mathbb{F}_{q}[t]/t^{r})]$ are not stably equivalent of Morita type when:
\begin{itemize}
    \item $r=2$ and $p\geq 2n$;
    \item $r=3$ and $p\geq 3$;
    \item $r\geq 4$ and $p\geq 2$.
\end{itemize}

\end{theorem A}

\begin{proof}

For $F$ a perfect field of characteristic $p > 0$, a stable equivalence of Morita type between two symmetric $F$-algebras $A$ and $B$ preserves the Külshammer ideals. Being more specific, for $n\geq 0$, we have that $T_n(B)^{\perp}/Z^{pr}(B)$ and $T_n(A)^{\perp} /Z^{pr}(A)$ are isomorphic as ideals via an algebra isomorphism $Z(B)/ Z^{pr}(B) \to Z(A)/ Z^{pr}(A)$, see \cite[Proposition 5.8]{konig2012transfer}. Assuming that $T_n(A)^{\perp}=T(A)^{\perp}$ for some $n$, we have $T_n(B)^{\perp}/ Z^{pr}(B)=T(B)^{\perp}/ Z^{pr}(B)$ under a stable equivalence of Morita type between the algebras $A$ and $B$. As $Z^{pr}(B)\subseteq T(B)^{\perp} \subseteq T_n(B)^{\perp}$, we have $T_n(B)^{\perp}=T(B)^{\perp}$.
   

    
Thus, a stable equivalent of Morita type between the two group algebras $F[\bfG(W_{r}(\mathbb{F}_{q}))]$ and $F[\bfG(\mathbb{F}_{q}[t]/t^{r})]$ is only possible if the groups have the same $p$-exponent by \cref{6proposition:pKul}. By \cref{Proposition:p-exponentr} and \cref{Proposition:p-exponent}, the $p$-exponent of the two groups are different in the cases listed above. Thus, there is no stable equivalence of Morita type between the group algebras $F[\bfG(W_{r}(\mathbb{F}_{q}))]$ and $F[\bfG(\mathbb{F}_{q}[t]/t^{r})]$.
\end{proof}


\begin{corollary}
The group algebras $F[\bfG(W_{r}(\mathbb{F}_{q}))]$ and $F[\bfG(\mathbb{F}_{q}[t]/t^{r})]$ are not isomorphic under the conditions of Theorem A.
\end{corollary}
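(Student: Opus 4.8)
The final statement is the corollary asserting that the group algebras are not isomorphic under the conditions of Theorem A. The plan is to deduce non-isomorphism from the already-established failure of stable equivalence of Morita type, using the elementary observation that an algebra isomorphism is a special case of a Morita equivalence, and that a Morita equivalence is in particular a stable equivalence of Morita type.

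\medskip

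First I would recall the standard implications among these equivalence relations on finite-dimensional algebras. If $A$ and $B$ are isomorphic as $F$-algebras, then they are Morita equivalent; and every Morita equivalence is a stable equivalence of Morita type. Concretely, if $\varphi\colon A \to B$ is an algebra isomorphism, one can produce the required bimodules directly: take $M = B$ viewed as a $B$\nobreakdash-$A$-bimodule via $\varphi$ on the right (so $b \cdot m \cdot a = b\, m\, \varphi(a)$) and $N = B$ viewed as an $A$\nobreakdash-$B$-bimodule via $\varphi$ on the left. These are free of rank one on each side, hence satisfy the projectivity conditions (1)(a) and (1)(b) of the definition, and one checks $M \otimes_A N \simeq B$ and $N \otimes_B M \simeq A$ as bimodules with $P = Q = 0$, so conditions (2)(a) and (2)(b) hold trivially. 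Thus $(M,N)$ induces a stable equivalence of Morita type between $A$ and $B$.

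\medskip

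With this in hand, the corollary is the contrapositive of Theorem A. Under the stated conditions on $r$ and $p$, Theorem A says that $F[\bfG(W_r(\mathbb{F}_q))]$ and $F[\bfG(\mathbb{F}_q[t]/t^r)]$ are \emph{not} stably equivalent of Morita type. Were they isomorphic as $F$-algebras, the construction above would exhibit a stable equivalence of Morita type between them, contradicting Theorem A. Hence they cannot be isomorphic.

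\medskip

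I do not expect any genuine obstacle here, since the argument is a formal consequence of the hierarchy of equivalences together with Theorem A. The only point requiring a line of justification is the claim that an algebra isomorphism induces a stable equivalence of Morita type in the precise sense of the definition given in \Cref{categories}; this is immediate from the twisted-bimodule construction sketched above, and could alternatively be cited from the literature on Morita and stable equivalences. No hypotheses beyond those of Theorem A are needed, and in particular the conditions $p \geq n$ together with the case distinctions on $r$ are inherited verbatim.
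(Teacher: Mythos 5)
Your proposal is correct and follows essentially the same route as the paper: both deduce the corollary from Theorem A via the chain ``algebra isomorphism $\Rightarrow$ Morita equivalence $\Rightarrow$ stable equivalence of Morita type.'' The only difference is cosmetic --- you construct the twisted bimodules explicitly, whereas the paper cites the standard hierarchy of equivalences for self-injective algebras from the literature.
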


\begin{proof}
  There are stronger equivalences between $F$-algebras than a stable equivalence of Morita type. Note that for self-injective algebras we have:

$$
\begin{array}
[c]{cccccccc}

F- \text{algebra isomorphism} & \Longrightarrow &\text{Morita equivalence} \\
& & \Downarrow & & \\
\text{Stable equivalence of Morita type} & \Longleftarrow & \text{derived equivalence}&

\end{array}
$$
see \cite{broue1994equivalences}. 


Note that group algebras over a field $F$ are self-injective algebras \cite[Theorem 4.11.2]{linckelmann2018block1}. Thus, it follows that none of the stronger equivalences in the diagram above is satisfied for the group algebras $F[\bfG(W_{r}(\mathbb{F}_{q}))]$ and $F[\bfG(\mathbb{F}_{q}[t]/t^{r})]$. In particular, the group algebras $F[\bfG(W_{r}(\mathbb{F}_{q}))]$ and $F[\bfG(\mathbb{F}_{q}[t]/t^{r})]$ are not isomorphic algebras.
\end{proof}


\vspace{2mm}

\bibliographystyle{plain}
\bibliography{Bibliography}

\end{document}